     \newcommand{\card}[1]{\left\vert #1 \right\vert}
    \newcommand{\set}[1]{\left\{ #1 \right\}}
\title{Maximal Independent Sets in Planar Triangulations}
\author{P. Francis}
\affil{Department of Mathematics\\
    SAS, VIT-AP University, Amaravati, 
    Andhra Pradesh, India.\\
    francis@vitap.ac.in}
\author{Abraham M. Illickan}
\affil{Department of Computer Science\\
    University of California, Irvine\\
    aillicka@uci.edu}
   \author{Lijo M. Jose \thanks{Supported by Half-Time Research Assistantship (HTRA) funded by Ministry of Education, Government of India}}
   \author{Deepak Rajendraprasad}
\affil{  Department of Computer Science\\
    Indian Institute of Technology Palakkad\\
    112004005@smail.iitpkd.ac.in,
    deepak@iitpkd.ac.in}
\newtheorem{theorem}{Theorem}
\newtheorem{lemma}[theorem]{Lemma}
\newtheorem{observation}[theorem]{Observation}
\theoremstyle{remark}
\newtheorem{claim}{Claim}[theorem]
\tikzstyle{nofill_node}=[fill=white, draw=black, shape=circle,inner sep = 0,minimum size = 0.8cm,]
\tikzstyle{gray}=[-, draw={gray}]
\tikzset{
    mygauss/.pic={
        \begin{tikzpicture}
	\begin{pgfonlayer}{nodelayer}
		\node [style={nofill_node}] (0) at (0, 3) {};
		\node [style={nofill_node}] (1) at (-4, -2) {};
		\node [style={nofill_node}] (2) at (4, -2) {};
		\node [style={nofill_node}] (3) at (0, 1.5) {};
		\node [style={nofill_node}] (4) at (-1, 0) {\tiny$v_2$};
		\node [style={nofill_node}] (5) at (1, 0) {\tiny$v_3$};
		\node [style={nofill_node}] (6) at (0, -1) {\tiny$v_1$};
		\node  (7) at (1.25, 3.5) {\tiny{$y = b_2 = b_3$}};
		\node  (8) at (1, 2) {\tiny{$x=b_{23}$}};
	\end{pgfonlayer}
	\begin{pgfonlayer}{edgelayer}
		\draw (1) to (0);
		\draw (0) to (2);
		\draw (1) to (2);
		\draw (0) to (3);
		\draw [style=gray] (4) to (5);
		\draw [style=gray] (5) to (6);
		\draw [style=gray] (6) to (4);
		\draw [style=gray] (3) to (4);
		\draw [style=gray] (3) to (5);
		\draw [style=gray] (0) to (5);
		\draw [style=gray] (0) to (4);
		\draw [style=gray] (4) to (1);
		\draw [style=gray] (6) to (1);
		\draw [style=gray] (6) to (2);
		\draw [style=gray] (5) to (2);
	\end{pgfonlayer}
\end{tikzpicture}
    }}
\begin{document}

\maketitle
\begin{abstract}
    We show that every planar triangulation on $n$ vertices has a maximal independent set of size at most $n/3$. This affirms a conjecture by Botler, Fernandes and Guti\'errez [Electron.\ J.\ Comb., 2024], which in turn would follow if an open question of Goddard and Henning [Appl.\ Math.\ Comput., 2020] which asks if every planar triangulation has three disjoint maximal independent sets were answered in the affirmative. Since a maximal independent set is a special type of dominating set (independent dominating set),  this is a structural strengthening of a major result by Matheson and Tarjan [Eur.\ J.\ Comb., 1996] that every triangulated disc has a dominating set of size at most $n/3$, but restricted to triangulations.

    \paragraph{Keywords:}
        Independent dominating sets, 
        Independent domination number, 
        Maximal independent set, 
        Planar triangulations. 
    \\ \noindent
    \textbf{2020 AMS Subject Classification} 
        05C69.
\end{abstract}

\section{Introduction}

All graphs considered in this paper are finite and undirected. A \emph{triangulated disc} is a planar graph embedded in a plane such that all its internal faces are bounded by a cycle on three edges and the external face is bounded by a simple cycle. A \emph{triangulation} is a triangulated disc in which the outer face is also bounded by a cycle on three edges. Unless mentioned otherwise, the triangulations and triangulated discs we consider are simple. Let $G(V,E)$ be a graph. For $S \subset V$, the \emph{open neighborhood} $N(S)$ of $S$ is the set of all vertices in $G$ which have at least one neighbor in $S$. The set $S$ is called \emph{dominating} in $G$ if $N(S) \cup S = V$. The size of a smallest dominating set is called the \emph{domination number} $\gamma(G)$ of $G$. The set $S$ is \emph{independent}, if no two vertices in $S$ are adjacent. The size of a largest independent set is called the \emph{independence number} $\alpha(G)$ of $G$. A set $S\subseteq V$ is an \emph{independent dominating set} if $S$ is both independent and dominating. The size of a smallest independent dominating set is called the \emph{independent domination number} $\iota(G)$) of $G$. While a largest independent dominating set is same as a largest independent set, a smallest independent dominating set has a different behavior from that of a smallest dominating set. For example, $\gamma(K_{n,n})=2$ but $\iota(K_{n,n})=n$. In this work, we are interested in a smallest independent dominating set, or in other words, a smallest maximal independent set.  

Matheson and Tarjan~\cite{MathTar} in 1996 proved that there exist three disjoint dominating sets in any triangulated disc. Hence, for every triangulated disc $G$ on $n$ vertices, $\gamma(G)\leq n/3$. They also proved that this upper bound is tight for triangulated discs, while they conjectured that this could be improved to $n/4$ for large enough triangulations. This remains a major open problem. \v{S}pacapan \cite{vspacapan2020domination} in 2020 showed that every triangulation on $n > 6$ vertices has a dominating set of size at most $17n/53$. In 2024, Christiansen, Rotenberg and Rutschmann \cite{christiansen2024triangulations} showed that every triangulation on $n > 10$ vertices has a dominating set of size at most $2n/7$. We prove the following structural strengthening of Matheson and Tarjan's result for triangulations. 

\begin{theorem}\label{MainTheorem}
Every $n$-vertex triangulation has an independent dominating set of size at most $n/3$.  
\end{theorem}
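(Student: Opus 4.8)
The plan is to prove, by strong induction on $n$, a statement stronger than Theorem~\ref{MainTheorem} that can be glued across small separators: roughly, that every $n$-vertex triangulation $T$ with a prescribed facial triangle $f=xyz$ admits an independent dominating set $S$ with $|S|\le n/3$ whose intersection with $\{x,y,z\}$ is exactly a single prescribed vertex $t$ (being neighbours of $t$, the other two of $x,y,z$ are then automatically excluded from any such $S$). Since an independent dominating set is precisely a maximal independent set, Theorem~\ref{MainTheorem} is the special case in which the condition on $f$ is dropped. Pinning down the exact form of this strengthened hypothesis --- strong enough to drive the reductions below, yet weak enough to survive them (one may, for instance, need to control $S$ on two designated faces simultaneously, or to settle for ``$S$ meets $f$'' instead of ``$S$ contains a prescribed $t\in f$'') --- is, I expect, the main conceptual hurdle.

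With such a hypothesis in hand, the case where $T$ is not $4$-connected is easy. Then $T$ has a separating triangle $c=xyz$ (this also covers the presence of a degree-$3$ vertex, whose link is such a triangle), which splits $T$ into triangulations $T_1,T_2$ with $V(T_1)\cap V(T_2)=\{x,y,z\}$ and $|V(T_1)|+|V(T_2)|=n+3$. Apply the hypothesis to each $T_i$ rooted at $c$ with the \emph{same} prescribed vertex $t$; the resulting $S_1,S_2$ satisfy $S_1\cap\{x,y,z\}=\{t\}=S_2\cap\{x,y,z\}$. Since $c$ is separating, every edge of $T$ lies inside $T_1$ or inside $T_2$, so $S_1\cup S_2$ is independent; it plainly dominates; and
\[
|S_1\cup S_2| \;=\; |S_1|+|S_2|-1 \;\le\; \Big\lfloor\tfrac{|V(T_1)|}{3}\Big\rfloor+\Big\lfloor\tfrac{|V(T_2)|}{3}\Big\rfloor-1 \;\le\; \Big\lfloor\tfrac{n+3}{3}\Big\rfloor-1 \;=\; \Big\lfloor\tfrac{n}{3}\Big\rfloor,
\]
using $\lfloor a\rfloor+\lfloor b\rfloor\le\lfloor a+b\rfloor$. (If the prescribed face of $T$ lies strictly on one side, one recurses on that side while controlling $S$ on both that face and $c$, which is where a two-face version of the hypothesis is convenient.)

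So the real work is the $4$-connected case, where $\delta(T)\ge 4$ forces a vertex $v$ of degree $4$ or $5$ whose link is an \emph{induced} cycle $v_1\cdots v_k$ (a chord $v_iv_j$ would make $\{v,v_i,v_j\}$ a separating triangle). Delete $v$ and retriangulate the resulting $k$-gon by adding the chord $v_1v_3$, and also $v_1v_4$ if $k=5$; $4$-connectivity guarantees these are not already edges of $T$, so we obtain a simple triangulation $T'$ on $n-1$ vertices, and induction yields an independent dominating set $S'$ of $T'$ with $|S'|\le(n-1)/3$. Now $E(T-v)\subseteq E(T')$, so $S'$ is automatically independent in $T$; the difficulty is domination. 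First, $v$ itself must be dominated: if some $v_i\in S'$ this is free, and otherwise $v$ may safely be added to $S'$ (its only neighbours are $v_1,\dots,v_k\notin S'$). Second --- and this is the crux --- a link vertex $v_i$ may have been dominated in $T'$ only through one of the newly added chords, so that it is undominated in $T$, and one often cannot add a nearby vertex to re-dominate it without destroying independence. I expect this to be handled by a case analysis steered by the strengthened hypothesis: one prescribes (through the choice of root for $T'$, and a matching choice of chords) exactly which vertices near $v$ lie in or avoid $S'$, and then in each configuration re-dominates $v$ and any chord-dependent $v_i$, possibly via a local swap inside $S'$, at no net cost to $|S'|$. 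Making this bookkeeping close at the exact value $\lfloor n/3\rfloor$ rather than $\lfloor n/3\rfloor+O(1)$, and charting the configurations where the link vertices of $v$ have coincident ``dominators'' (for which the figure's labels $v_1,v_2,v_3$ and $b_2,b_3,b_{23}$ seem intended), is where the proof must do its real engineering.
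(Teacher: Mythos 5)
Your proposal is a programme, not a proof, and the two places where you yourself flag ``the main conceptual hurdle'' and ``the real engineering'' are exactly where it breaks down. First, the strengthened induction hypothesis is never formulated, and the candidate you float --- that every rooted triangulation has a maximal independent set of size at most $n/3$ containing a \emph{prescribed} vertex $t$ of a prescribed face --- is a much stronger statement than the theorem and is nowhere justified; the whole separating-triangle gluing rests on it. Second, the arithmetic in the $4$-connected case does not close. Deleting a single vertex $v$ of degree $4$ or $5$ and invoking induction gives $|S'|\le\lfloor(n-1)/3\rfloor$, and whenever no link vertex of $v$ lies in $S'$ you must add a vertex to dominate $v$, landing at $\lfloor(n-1)/3\rfloor+1$, which exceeds $\lfloor n/3\rfloor$ unless $n\equiv 0\pmod 3$. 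So for two of the three residue classes your reduction must succeed with \emph{zero} net additions in every configuration, via local swaps that simultaneously preserve independence, re-dominate $v$, and re-dominate any link vertex whose only dominator in $T'$ was an endpoint of an added chord. That is precisely the unexecuted case analysis, and there is no evidence it can always be done; a one-vertex-at-a-time reduction simply does not have enough slack against a $n/3$ target.

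For contrast, the paper avoids induction on the whole graph entirely. It takes a minimal counterexample $G$ and deletes an entire facial triangle of low-degree vertices (three vertices at once, restoring at most one, which is what makes the $n/3$ arithmetic work) to show such triangles cannot occur (Lemma~\ref{facial_cycle}). It then uses this structure together with the Four Color Theorem to build a partial proper $4$-coloring in which every $5^-$-vertex sees all four colors in its closed neighborhood (Lemma~\ref{coloring}), extends each color class to an independent dominating set, and shows via a counting argument (Observation~\ref{independent_set} and Lemma~\ref{vertex_cover}) that the four resulting sets have total size at most $(4n-2)/3$, so the smallest is below $n/3$. If you want to pursue your route, the lesson from Lemma~\ref{facial_cycle} is that the reduction unit must be a triangle of low-degree vertices, not a single vertex; but even then the paper only uses this to forbid a local configuration, not to drive a global induction.
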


Goddard and Henning~\cite{goddard2020independent} asked whether there exist three disjoint independent dominating sets in every triangulation. This will imply that, for any triangulation $G$, $\iota(G) \leq n/3$. Botler, Fernandes and Guti\'errez~\cite{botler2023independent} recently proved that, for every triangulation $G$ on $n$ vertices, $\iota(G) < 3n/8$ and if the minimum degree is at least five, then $\iota(G) \leq n/3$. Based on these results, they conjectured $\iota(G) \leq n/3$ for every triangulation. Goddard and Henning~\cite{goddard2020independent} also construct an infinite family of triangulations where the size of any independent dominating set is at least $6n/19$. Small triangulations like the triangle and the octahedron have $\iota(G) = n/3$. However, it is not clear whether the upper bound of $n/3$ can be improved for large enough $n$.

The study of independent domination (as well as domination) in graphs traces its origin to chessboard puzzles. In 1862, Carl Ferdinand von Jaenisch, a chess player and theorist, in his  famous three-volume treatise on chess strategies~\cite{jaenisch1862traite}, wanted to find out the minimum number of mutually non-attacking queens that can be arranged on an $8 \times 8$ chessboard so that every square on the board is attacked by at least one of these queens. A century later, in 1962, these questions were formalised in to the language of modern graph theory by  Berge~\cite{berge1962theory} and Ore~\cite{ore1962theory}. The term independent domination and the notation $\iota(G)$ were introduced by Cockayne and Hedetniemi in 1974~\cite{cockayne1974independence,cockayne1977towards}. Interestingly, the answer to Jaenisch's question is $5$, which is the same as the size of a smallest dominating set of queens on an $8 \times 8$ chessboard. MacGillivray and Seyffarth in~\cite{macgillivray2004bounds} proved that if $G$ is a connected graph on $n$ vertices with chromatic number $k \geq 3$, then $\iota(G) \leq ( k - 1 ) n/k - ( k - 2 )$. Combining this with the Four Color Theorem~\cite{AppHak,AppHakKoc}, we get $\iota(G) \leq 3n/4 - 2$ for any planar graph $G$. In the same paper, they also show that the upper bound can be improved to $\lceil n/3 \rceil$ if we restrict to planar graphs with diameter $2$. See the survey article by Goddard and Henning~\cite{goddard2013independent} for more results on independent domination of graphs.

Domination number is a widely studied parameter on many graph classes. The monograph by Haynes et al.~\cite{haynes2013fundamentals} gives a comprehensive reference on domination.  Reed~\cite{reed1996paths}, Sohn and Yuan~\cite{sohn2009domination}, and Xing et al.~\cite{xing2006domination} established upper bounds of $3n/8$, $4n/11$ and $5n/14$ respectively on the domination number for $n$-vertex graphs with minimum degree at least three, four and five. The domination number of planar graphs have received special attention. MacGillivray and Seyffarth~\cite{macgillivray1996domination} established an upper bound of three and ten respectively on the domination number of planar graphs with diameter two and three. Goddard and Henning~\cite{goddard2002domination} improved upon this and showed that there is only one planar graph of diameter two with domination number three; they also showed that every sufficiently large planar graph of diameter three has domination number at most seven. 

\emph{Independence number} $\alpha(G)$ is the size of a maximum independent set in $G$. The survey paper by Dainyak and Sapozhenko~\cite{DainyakSapozhenko} provides a comprehensive catalog of the results on independent sets. Caro~\cite{caro1979new} and Wei~\cite{wei1981lower} independently proved that for any graph $G$, $\alpha(G)\geq \sum_{v\in V(G)} (1+d_G(v))^{-1}$. There were some notable studies and improvements to this result in specific graph classes like triangle free graphs, $r$-colorable graphs, triangle free planar graphs etc. By the Four Color Theorem, we know that for any planar graph $G$, $\alpha(G) \geq n/4$. This result is tight. In 1973, while the Four Color Theorem was still unproven, Erd\H{o}s~\cite{bergeGraphesetHypergraphes} conjectured that $\alpha(G) \geq n/4$ for any $n$-vertex planar graph $G$. This is an invitation to prove this without using the Four Color Theorem. Albertson~\cite{ALBERTSON197684} in 1976, without using the Four Color Theorem, proved that every $n$-vertex planar graph $G$ has $\alpha(G) \geq 2n/9$. Carnston and Rabern~\cite{cranston2016planar} improved this to $3n/13$ in 2016. 
 
\subsection{Terminology and notation}
Let $G$ be a graph. The vertex-set and the edge-set of $G$ are denoted respectively by $V(G)$ and $E(G)$. The open neighborhood (resp. closed neighborhood) of a vertex $v$ in graph $G$ is denoted by $N_G(v)$ (resp. $N_G[v]$). The \emph{degree} $d_G(v)$ of a vertex $v$ in $G$ is $|N_G(v)|$. 
A graph is \emph{planar} if it can be embedded on the plane in such a way that no two edges cross each other. A plane graph $G$ is a planar graph together with such an embedding.  We denote the set of vertices lying on the boundary of a face $f$ as $V(f)$. A vertex of degree exactly $k$, at least $k$ and at most $k$ in $G$ are respectively termed \emph{$k$-vertex}, \emph{$k^+$-vertex} and \emph{$k^-$-vertex}. A cycle of length exactly $k$, at least $k$ and at most $k$ in $G$ are respectively termed \emph{$k$-cycle}, \emph{$k^+$-cycle} and \emph{$k^-$-cycle}. A face with exactly $k$, at least $k$ and at most $k$  edges in its boundary are termed respectively \emph{$k$-face}, \emph{$k^+$-face} and \emph{$k^-$-face}. We use the notations $P_k$, $K_k$ and $K_{k,k}$ to denote a path on $k$ vertices, a complete graph on $k$ vertices and a complete bipartite graph with $k$ vertices in each part respectively.

\section{Proof of Theorem~\ref{MainTheorem} }

 We will first show that a smallest (in terms of number of vertices) counterexample to Theorem~\ref{MainTheorem} cannot contain facial cycles formed by one $4$-vertex and two vertices of degree $4$ or $5$ (Lemma~\ref{facial_cycle}). We will then exploit this structure and the Four Color Theorem to obtain a partial proper four coloring of this counterexample with the key property that every $5^-$-vertex is guaranteed to see all four colors in its closed neighborhood (Lemma~\ref{coloring}). We will then show that one color class from this partial coloring can be expanded into an independent dominating set of size less than $n/3$, which is a contradiction. Bounding the size of this extension requires a technical lemma (Lemma~\ref{vertex_cover}) and an easy consequence of Euler's formula (Observation~\ref{independent_set}). We will prove and set aside Observation~\ref{independent_set} and then take up the lemmas in order.

\begin{observation} \label{independent_set}
   Every independent set of $6^+$-vertices in an $n$-vertex triangulation has size at most $(n-2)/3$.
\end{observation}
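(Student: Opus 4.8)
The plan is to run a double-counting argument between the vertices of the independent set and the triangular faces of the triangulation, using only Euler's formula. Fix an $n$-vertex triangulation $G$ together with its plane embedding, and let $I$ be an independent set consisting of $6^+$-vertices; set $k = |I|$. Recall that a triangulation on $n$ vertices has exactly $3n-6$ edges and, by Euler's formula, exactly $2n-4$ faces, every one of which is a triangle (including the outer face).

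First I would count incidences between the vertices of $I$ and the faces of $G$. The key local fact is that in a triangulation the neighbors of any vertex $v$ form a cycle $C_v$ of length $d_G(v)$, so the faces incident to $v$ are exactly the $d_G(v)$ triangles spanned by $v$ and a consecutive pair on $C_v$; in particular $v$ is incident to exactly $d_G(v)$ faces. Summing over $I$, the number of face–vertex incidences with $I$ equals $\sum_{v\in I} d_G(v) \ge 6k$, since every vertex of $I$ has degree at least $6$.

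Next I would bound the same quantity from above. Each face $f$ of $G$ is a triangle, and if two of its three vertices lay in $I$ they would be adjacent, contradicting independence of $I$; hence every face is incident to \emph{at most one} vertex of $I$. Therefore the number of face–vertex incidences with $I$ is at most the number of faces, namely $2n-4$. Combining the two bounds gives $6k \le 2n-4$, i.e.\ $k \le (n-2)/3$, as claimed.

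There is essentially no serious obstacle here: the only points needing care are the standard facts that a triangulation has $2n-4$ faces and that each vertex is incident to exactly as many faces as its degree (which requires the neighborhood of each vertex to be a cycle, true for $n \ge 3$ in a simple triangulation), plus the trivial observation that an independent set meets each triangle in at most one vertex. For the smallest cases, such as $n=3$, the statement holds vacuously since there are no $6^+$-vertices and $(n-2)/3 \ge 0$.
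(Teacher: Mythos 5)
Your proof is correct, but it takes a different route from the paper's. The paper deletes $I$ from $G$ to form a plane graph $H$, observes that the faces of $H$ are the surviving triangles plus one $6^+$-face per deleted vertex, and then combines Euler's formula for $H$ with the edge--face count $2|E(H)| \geq 3f_3 + 6f_{6^+}$. You instead double count vertex--face incidences directly in $G$: each $v \in I$ lies on exactly $d_G(v) \geq 6$ of the $2n-4$ triangular faces, while independence forces each triangle to contain at most one vertex of $I$, giving $6|I| \leq 2n-4$. The two arguments are close cousins --- both are Euler-formula double counts and arrive at essentially the same inequality (the paper's version just retains an extra $-f_3/2$ slack that is then discarded) --- but yours is the more economical of the two. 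It avoids constructing the auxiliary graph $H$ and thereby sidesteps two points the paper leaves implicit: that $H$ is connected (so that Euler's formula applies in the form $|V|-|E|+|F|=2$) and that the $|I|$ newly created faces are pairwise distinct $6^+$-faces; both follow from independence and the degree hypothesis, but your approach never needs them. The only ingredients you use are that a simple triangulation has $2n-4$ faces, all triangles, and that the link of each vertex is a cycle of length equal to its degree, so each vertex is incident to exactly $d(v)$ faces; both are standard and you correctly flag the degenerate case $n=3$ as vacuous.
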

\begin{proof}
    Let $G$ be a triangulation on $n$ vertices and let $I$ be an independent set of $6^+$-vertices in $G$. Let $H$ be a plane graph obtained by deleting $I$ from $G$. Hence $|V(H)| = n-|I|$. Since all vertices in $I$ have degree at least six in $G$, $H$ contains only $3$-faces and $6^+$-faces. Let $f_3$ and $f_{6^+}$ denote the number of $3$-faces and $6^+$-faces respectively. By Euler's formula we have.
    
    \begin{equation} \label{eq:1}
            |V(H)| - 2 = |E(H)| - (f_3 + f_{6^+})
    \end{equation}
    
    By the standard double counting, $2|E(H)| \geq 3f_3 + 6f_{6^+}$. Also from the construction of $H$, we have $|V(H)| = n - |I|$ and $f_{6^+} = |I|$. Substituting these in equation~(\ref{eq:1}) 
    we get
     \begin{equation}
        |I| \leq \frac{ n-2 - f_3/2}{3}\leq \frac{n-2}{3}.
    \end{equation}     
\end{proof}


Now we begin the proof of Theorem~\ref{MainTheorem}. We will fix an arbitrary smallest counterexample (in terms of number of vertices) as $G$ and analyze it in detail. The first task is to prove Lemma~\ref{facial_cycle}. 
\begin{lemma} \label{facial_cycle}
If $G$ is a smallest counterexample to Theorem~\ref{MainTheorem}, then $G$ does not contain a facial cycle $(v_1,v_2,v_3)$ such that $4 =  d(v_1) \leq d(v_2) \leq d(v_3) \leq 5$.
\end{lemma}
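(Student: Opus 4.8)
The plan is to argue by a minimal-counterexample/discharging-free reduction: assume $G$ is a smallest counterexample and that it contains such a facial triangle $T=(v_1,v_2,v_3)$ with $d(v_1)=4$ and $d(v_2),d(v_3)\in\{4,5\}$. The strategy is to contract or delete a small portion of $G$ around $v_1$ to obtain a strictly smaller triangulation $G'$, invoke minimality to get an independent dominating set $S'$ of $G'$ with $|S'|\le |V(G')|/3$, and then patch $S'$ back into an independent dominating set $S$ of $G$ with $|S|\le n/3$, contradicting that $G$ is a counterexample. Because $v_1$ has degree $4$, its neighborhood is a $4$-cycle $(v_2,x,v_3,y)$ (using the notation of the \texttt{mygauss} picture, where $y$ is the apex common to the two triangles on $v_1v_2$ and $v_1v_3$ other than $T$, and $x=b_{23}$ is the fourth neighbor), so $T$ together with its two neighboring faces along $v_1v_2$ and $v_1v_3$ forms a nearly $K_5$-like gadget that is very rigid. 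The natural reduction is: delete $v_1$ (opening a $4$-face $v_2xv_3y$), then add the diagonal $v_2v_3$ back, i.e. replace $v_1$ by the single edge $v_2v_3$; the resulting graph $G'$ is a triangulation on $n-1$ vertices — one must only check it stays simple, which is where the degree conditions on $v_2,v_3$ and possibly the edge $xy$ enter.

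First I would nail down the local structure: list the neighbors of $v_1$ as $v_2, x, v_3, y$ in cyclic order, identify the two triangles $v_1v_2y$, $v_1v_3y$, $v_1v_2x$, $v_1v_3x$ wait — re-index so that $T=(v_1,v_2,v_3)$ shares edge $v_1v_2$ with triangle $v_1v_2 b_2$ and edge $v_1v_3$ with $v_1v_3 b_3$, and record that $b_2=b_3=:y$ and the fourth neighbor is $x=b_{23}$ with triangle $v_2xv_3$ (this is exactly the configuration drawn). Then I would split into cases according to whether $v_2v_3$ is already an edge elsewhere (it is, it's the edge of $T$), whether $xy\in E(G)$, and the precise degrees of $v_2$ and $v_3$, doing the delete-$v_1$-and-triangulate-the-$4$-face move in each case, picking the diagonal ($v_2v_3$ or $xy$) that keeps $G'$ simple. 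Second, apply minimality to $G'$ to get $S'$ with $|S'|\le (n-1)/3$. Third, and this is the crux, convert $S'$ into a valid $S$ for $G$: if $S'$ already dominates $v_1$ in $G$ (i.e. $S'$ meets $\{v_2,v_3,x,y\}$) and stays independent after reintroducing $v_1$ and its edges, take $S=S'$; otherwise none of $v_1$'s four neighbors is in $S'$, so $v_1\notin N_G[S']$ forces us to add $v_1$ to $S$ — but adding a vertex costs one, so we need a compensating saving, e.g. showing that in that case $S'$ can be chosen to already avoid putting any vertex on $T$'s region so that $|S'|\le (n-1)/3$ is actually $\le (n-4)/3 + 1$ via the extra structure, or alternatively that we can swap some vertex of $S'$ out.

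The main obstacle, as usual in these Matheson–Tarjan-style arguments, is the \textbf{accounting when $v_1$ itself must be added}: a naive reduction loses a vertex and a third of a vertex of slack, so the argument has to extract the missing $1/3$ from the local configuration — most likely by arguing that the very existence of the low-degree triangle $T$ lets us reroute $S'$ near $T$ so that $v_1$ can be covered for free, or by choosing a smarter reduction (for instance, contracting $v_1v_2$ or the path $xv_1y$ rather than deleting $v_1$, which removes one or two vertices and changes the required bound to $(n-1)/3$ or $(n-2)/3$). I would therefore try several reductions in parallel — $G-v_1+(\text{diagonal})$, the contraction $G/v_1v_2$, and the identification of $x$ with $y$ — and in each case track exactly which vertices of $G$ fail to be dominated by a lifted $S'$ and whether independence is preserved; the winning reduction is the one whose "bad case" is vacuous because of $4=d(v_1)\le d(v_2)\le d(v_3)\le5$. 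A secondary nuisance is keeping $G'$ simple and $3$-connected enough to still be a triangulation (no separating triangles appearing, no multi-edges from $xy$ already present or from $v_2,v_3$ sharing extra neighbors), which the degree bounds $d(v_2),d(v_3)\le5$ are clearly there to control, so I expect the case split to be short but requires care.
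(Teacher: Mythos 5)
There is a genuine gap, and you have in fact located it yourself without closing it: the accounting. Your reduction deletes only $v_1$ (or contracts one edge, or identifies $x$ with $y$), which removes one or two vertices and therefore buys only $1/3$ or $2/3$ of a vertex of slack from minimality, while repairing domination may cost a full extra vertex. You list several speculative ways to ``extract the missing $1/3$'' but do not establish any of them, so the proof is not complete. The paper's resolution is a different reduction that you never consider: delete \emph{all three} vertices of the facial triangle $T=\{v_1,v_2,v_3\}$ at once. This removes exactly three vertices, so the smaller triangulation $G'$ satisfies $|V(G')|\le n-3$ and minimality gives an independent dominating set $D'$ with $|D'|\le n/3-1$, leaving exactly one full vertex of slack --- enough to add one carefully chosen special vertex $v_{sp}$ that restores domination of whatever $D'$ misses. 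The degree hypotheses $4=d(v_1)\le d(v_2)\le d(v_3)\le 5$ are used not to keep a one-vertex deletion simple, but to force the boundary walk of the hole left by deleting $T$ to have length $s-9\in\{3,4,5\}$ (where $s$ is the degree sum), which makes the re-triangulation of that hole and the case analysis over $D'\cap V(B)$ finite and manageable.

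Two further points. First, even within your framework, the case ``$S'$ already dominates $v_1$'' has an unaddressed subtlety: $G'$ contains a diagonal absent from $G$, so a vertex of the opened $4$-face could be dominated in $G'$ only through that diagonal and become undominated when you pass back to $G$; the paper explicitly checks that the endpoints of the added edge are not in $D'$ in the relevant subcase. Second, your reading of the local picture (the \texttt{mygauss} configuration with $b_2=b_3=y$ and $x=b_{23}$) is only one subcase of one of the three degree patterns in the paper's analysis, namely the degenerate boundary walk in the $(4,5,5)$ case; the full proof must also handle $(4,4,4)$ and $(4,4,5)$ and the non-degenerate $4$- and $5$-cycle boundaries. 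As written, your proposal is a plan with the central difficulty flagged but unresolved, and the reductions you propose to try do not include the one that works.
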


The proof strategy is as follows. Suppose there exists a facial cycle $t = (v_1,v_2,v_3)$ in $G$ with $4 =  d(v_1) \leq d(v_2) \leq d(v_3) \leq 5$. We  delete the set $T = \set{v_1, v_2, v_3}$ from $G$ to obtain a subgraph $G^-$ with a new face $f$. We then carefully triangulate $f$ to obtain a triangulation $G'$ on at most $|V(G)| - 3$ vertices. In one case, we have to delete one more vertex before doing this triangulation. Finally, we show that an independent dominating set $D'$ in $G'$ can be extended to an independent dominating set $D$ of $G$ by adding at most one more vertex. This gives a contradiction since we can choose $D'$ to be of size $(|V(G')|-2)/3 \leq (|V(G)|-2)/3 -1$ and hence $|D| \leq (|V(G)|-2)/3$. The details of how to triangulate $f$ depend on the structure of $f$ while the selection of the additional vertex depends on $f$ and $D' \cap V(f)$. These cases constitute the bulk of the proof. 

\begin{proof}  
Let $t$, $T$, $G^-$ and $f$ be as defined in the proof strategy above. Let $B$ denote the closed walk bounding $f$ and let $s = \sum_{i=1}^3 d(v_i)$. We will use $G'$ to refer to the smaller triangulation to be constructed, $D'$ to refer to a smallest independent dominating set of $G'$ and $v_{sp}$ to denote the special vertex (if any) to be identified which when added to $D'$ will give an independent dominating set $D$ of $G$.
    
Each $v_i \in T$ has exactly two neighbors from $T$ itself in $G$. This contributes a total of six to $s$ and hence $s - 6$ edges go from $T$ to $V(B)$. If we contract the $3$-face $t$ to a single vertex $v_t$ (removing the self loops but retaining multiple edges), we get $s-6$ faces incident to $v_t$ of which exactly three are $2$-faces (the faces which contained an edge of $t$) while the remaining are $3$-faces. Since each step in the walk $B$ contributes to one $3$-face incident to $v_t$, the total length of $B$ is $s - 9$. There are only three possibilities for $(d(v_1),d(v_2),d(v_3))$, viz., $(4,4,4)$, $(4,4,5)$, and $(4,5,5)$. 

\paragraph{Case 1 ($d(v_1) = d(v_2) = d(v_3)= 4$).}
Here $s = 12$ and length $l_B$ of the closed walk $B$ is $12 - 9 = 3$. The only possible closed walk of length three in a simple graph is a $3$-cycle. So $B$ is a $3$-cycle. Here $G^-$ itself is a triangulation which is smaller than $G$, so we choose $G'=G^-$. If a smallest independent dominating set $D'$ of $G'$ itself dominates $G$, then we are done. Otherwise, there exists at least one vertex $v \in T$ which is not dominated by $D'$. We choose $v$ as $v_{sp}$. 

\paragraph{Case 2 ($d(v_1) =d(v_2) =4$, $d(v_3)= 5$).} 
In this case $s = 13$ and $l_B = 4$. Since $B$ is a closed walk of length four bounding a face in a simple graph $G$, $B$ can either be a $4$-cycle or along a $P_3$. Note that a path cannot divide a plane into two regions. So in any planar graph, if a $P_3$ is the boundary of a face $f$, then $f$ is the only face in the entire graph and $P_3$ is its only boundary. So $G^-$ is a $P_3$. Since $T$ was the only set of vertices deleted from $G$, $G$ is a triangulation on six vertices. Hence $v_3$, being a $5$-vertex, will dominate all of $G$. 

\begin{figure}[t]
    \centering
    
    \begin{subfigure}[t]{0.45\textwidth}
        \centering
     \begin{adjustbox}{max width=.9\textwidth}
    \begin{tikzpicture}{a}
	\begin{pgfonlayer}{nodelayer}
		\node [style={nofill_node}] (0) at (-2, 0) {$v_1$};
		\node [style={nofill_node}] (1) at (2, 0) {$v_2$};
		\node [style={nofill_node}] (2) at (0, -2.5) {$v_3$};
		\node [style={nofill_node}] (3) at (0, 2.75) {$b_{12}$};
		\node [style={nofill_node}] (4) at (-4, 0) {$b_{31}$};
		\node [style={nofill_node}] (5) at (0, -4.5) {$b_3$};
		\node [style={nofill_node}] (6) at (4, 0) {$b_{23}$};
	\end{pgfonlayer}
	\begin{pgfonlayer}{edgelayer}
		\draw (0) to (2);
		\draw (2) to (1);
		\draw (0) to (1);
		\draw (2) to (5);
		\draw (2) to (4);
		\draw (2) to (6);
		\draw (4) to (5);
		\draw (5) to (6);
		\draw (4) to (3);
		\draw (3) to (6);
		\draw (0) to (4);
		\draw (0) to (3);
		\draw (3) to (1);
		\draw (1) to (6);
	\end{pgfonlayer}
\end{tikzpicture}
\end{adjustbox}
\caption{Case when $B$ is a $4$-cycle}
\label{C4}
\end{subfigure}
    \begin{subfigure}[t]{0.45\textwidth} 
        \centering
         \begin{adjustbox}{max width=.9\textwidth}
        \begin{tikzpicture}{b}
	\begin{pgfonlayer}{nodelayer}
		\node [style={nofill_node}] (0) at (0, 1) {$v_1$};
		\node [style={nofill_node}] (1) at (2, -1) {$v_2$};
		\node [style={nofill_node}] (2) at (-2, -1) {$v_3$};
		\node [style={nofill_node}] (3) at (1.75, 3) {$b_{12}$};
		\node [style={nofill_node}] (4) at (-2, 3) {$b_{31}$};
		\node [style={nofill_node}] (5) at (-4, -1) {$b_3$};
		\node [style={nofill_node}] (6) at (0, -4.25) {$b_{23}$};
		\node [style={nofill_node}] (7) at (4, -1) {$b_2$};
	\end{pgfonlayer}
	\begin{pgfonlayer}{edgelayer}
		\draw (0) to (2);
		\draw (2) to (1);
		\draw (0) to (1);
		\draw (2) to (5);
		\draw (2) to (4);
		\draw (2) to (6);
		\draw (4) to (5);
		\draw (5) to (6);
		\draw (4) to (3);
		\draw (0) to (4);
		\draw (0) to (3);
		\draw (3) to (1);
		\draw (1) to (6);
		\draw (3) to (7);
		\draw (7) to (6);
		\draw (1) to (7);
	\end{pgfonlayer}
\end{tikzpicture}

\end{adjustbox}
 \caption{Case when $B$ is a $5$-cycle}
 \label{C5}
    \end{subfigure}
   \caption{Two of the forbidden structures in a smallest counterexample to Theorem~\ref{MainTheorem}.}
    \label{fig:enter-label}
\end{figure}
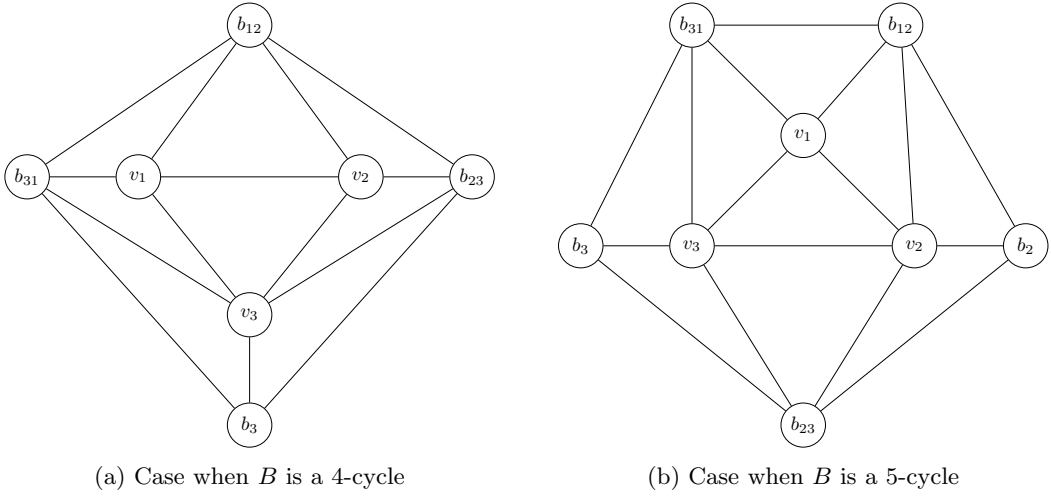

If $B$ is a $4$-cycle, then $B = (b_{12}, b_{23}, b_3,b_{31})$, where $b_3$ is adjacent to only $v_3$ in $T$ while $N(b_{ij}) \cap T = \{v_i, v_j\}$ (See Figure~\ref{C4}). Note that if a vertex $x$ in $B$ is adjacent to all three vertices in $T$, then it forms a $K_4$ in which one of the vertices of $T$ will have degree 3, a contradiction.  
In $f$, at least one pair of alternate vertices is not adjacent in $G^-$. This is because we cannot have both these (diagonal) edges through the exterior of $f$  without crossing. We obtain a triangulation $G'$ by adding this diagonal to $G^-$. If $D'$, a smallest dominating set in $G'$, dominates $G$ then we are done. If $D' \cap V(B) = \emptyset$, we select any one vertex $v_i \in T$ as $v_{sp}$. For $\{i,j,k\}=\{1,2,3\}$, if $D' \cap V(B)= \{b_{ij}\}$ then $\{b_{ij},v_k\}$ is independent and dominates all the vertices of $B$ and $T$ and hence we can choose $v_{sp} = v_k$. If $D' \cap V(B)=\{b_3\}$ then $\{b_{3},v_1\}$ is independent and dominates all the vertices of $B$ and $T$ and hence we can choose $v_{sp} = v_1$.
If $|D'\cap V(B)|=2$, then $D'\cap V(B)$ is either $\{b_{12},b_3\}$ or $\{b_{31},b_{23}\}$. In both these cases, $v_1$, $v_2$ and $v_3$ are dominated. Notice a subtle point that $D'$ is also a dominating set of $G^-$, since neither endpoint of the extra edge added for triangulating $f$ appears in $D'$. Hence, $D'$ is also a dominating set of $G$. 


\paragraph{Case 3 ($d(v_1) =4$, $d(v_2) = d(v_3)= 5$).} In this case, $s = 14$ and $l_B = 5$. Here too, there are only two possible configurations for the boundary $B$. Since $B$ is a closed walk of length five bounding a face in a simple graph, it can be either a $5$-cycle or along a $3$-cycle with a pendent vertex attached to one of its vertices. When $B$ is a $5$-cycle, the five vertices on $B$ appear cyclically in the order $(b_{12},b_2,b_{23},b_3,b_{31})$, where the subscripts indicate the neighbors of the vertex in $T$ as in the previous case (See Figure~\ref{C5}). 
We triangulate $G^- = G\setminus T$ to get $G'$ by adding two chords in the face $f$ of $G^-$. We can always do this without creating a multi-edge. Since $G$ does not contain a $K_5$, there is at least one chord missing in $f$. Once we draw this chord, we are left with a $4$-face. This can be triangulated by adding a diagonal as in the previous case to get $G'$ without creating any multi-edges. If a smallest independent dominating set $D'$ in $G'$ dominates $G$, we are done. If $D' \cap V(B) = \emptyset$, then $D'$ is also a dominating set in $G^-$. We select any vertex $v\in T$ as $v_{sp}$. Next we consider the cases when $\card{D' \cap V(B)} = 1$ (summarized in Table~\ref{tab:1}). For $\{i,j,k\}=\{1,2,3\}$, if $D' \cap V(B)= \{b_{ij}\}$ then $\{b_{ij},v_k\}$ is independent and dominates all the vertices of $B$ and $T$ and hence we can choose $v_{sp} = v_k$. For $\{j,k\}=\{2,3\}$, if $D' \cap V(B)=\{b_{j}\}$ then $\{b_{j},v_k\}$ is independent and dominates all the vertices of $B$ and $T$ and hence we can choose $v_{sp} = v_k$. 
If $\card{D' \cap V(B)} = 2$, then any two non-adjacent vertices from a $5$-cycle will dominate the entire $5$-cycle. Hence, $D'$ is a dominating set in $G^-$ also. The only case when two non-adjacent vertices of $B$ do not together dominate $T$ is when they are $b_2$ and $b_3$. In this case, $v_1$ is the only vertex which is not dominated and we select $v_1$ as $v_{sp}$. 

    \begin{table}[ht]
        \centering
        \begin{tabular}{|c|c|c|}
            \hline
            $D'\cap V(B)$ & $U$ & $v_{sp}$ \\\hline 
            $\set{b_{12}}$ & $\set{v_3,b_3,b_{23}}$ & $v_3$\\\hline 
            $\set{b_{2}}$ & $\set{v_1,v_3,b_3,b_{31}}$ & $v_3$\\\hline 
            $\set{b_{23}}$ & $\set{v_1,b_{12},b_{31}}$  & $v_1$\\\hline 
            $\set{b_{3}}$ & $\set{v_1,v_2,b_2,b_{12}}$ & $v_2$\\\hline 
            $\set{b_{31}}$ & $\set{v_2,b_2,b_{23}}$ & $v_2$\\\hline 
        \end{tabular}
        \caption{Choice of $v_{sp}$ when $B$ is a $5$-cycle and $\card{D' \cap V(B)} = 1$. $U$ is the largest set of vertices that may not be dominated by $D'$}
        \label{tab:1}
    \end{table}

The remaining part of the third case is when $B$ is along a $3$-cycle with a pendent vertex attached to one of its vertices. Let the closed walk be $(b_{12}, b_{2}, b_{23}, b_3, b_{31})$. We denote the pendent vertex as $x$ and its parent as $y$. Note that $y$ is visited twice in the closed walk. Hence, two vertices of the closed walk map to $y$. Since $x$ has at least three neighbors and its only neighbor outside $T$ is $y$, $x$  cannot be of type $b_i$. 
We obtain the triangulation $G'$ by deleting all the vertices $v_i \in T$ and the pendent vertex $x$ from $G$. The new $3$-face created will be denoted as $f'$. If a smallest independent dominating set $D'$ in $G'$ dominates $G$, we are done. Assume $D'$ does not have any vertices from the boundary $B'$ of $f'$. We know $x$ is of type $b_{ij}$. So we can select either $v_i$ or $v_j$ from $T$ as $v_{sp}$, which will dominate $x$ and $T$. Else, $D'$ can have only one vertex from $B'$. In this case, we select $v_{sp}$ based on the vertex on the closed walk which gets mapped to $x$ as described in Table~\ref{tab:2}.  If $b_{23}$ is mapped to $x$, then $b_2$ and $b_3$ are mapped to $y$. So the other two vertices on $B'$ are $b_{12}$ and $b_{31}$. Here, if $y$ is in $D'$, then it already dominates $x$, $v_2$ and $v_3$. So we choose $v_1$ which is the only undominated vertex as $v_{sp}$. If $b_{12}$ (resp. $b_{31}$) is in $D'$, then we choose $v_3$ (resp. $v_2$) as $v_{sp}$ which dominates $v_{sp}$ and $x$. If $b_{12}$ is mapped to $x$, then  $b_{31}$ and $b_{2}$ should be mapped to $y$. If $b_{31}$ is mapped to $x$, then $b_{3}$ and $b_{12}$ should be mapped to $y$. In both cases, if $y$ is in $D'$, then $D'$ dominates $G$. If any other vertex from $B'$ is in $D'$, then it should be $b_2$ or $b_3$ or $b_{23}$, hence we choose $v_1$ as $v_{sp}$.
\end{proof} 

    \begin{table}[ht]
    \setlength{\tabcolsep}{5mm} 
    \def\arraystretch{2.8} 
        \centering
        \begin{tabular}{|c|c|c|c|c|}
        \hline
             $x$ & $B$ & $D'\cap V(B)$ & $U$ & $v_{sp}$\\
             \hline
             \multirow{3}{*}{$b_{23}$} & \multirow{3}{*}{ \begin{adjustbox}{max width=.3\textwidth}{\begin{tikzpicture}
	\begin{pgfonlayer}{nodelayer}
		\node [style={nofill_node}] (0) at (0, 3) {$y$};
		\node [style={nofill_node}] (1) at (-4, -2) {$b_{12}$};
		\node [style={nofill_node}] (2) at (4, -2) {$b_{31}$};
		\node [style={nofill_node}] (3) at (0, 1.5) {$x$};
		\node [style={nofill_node}] (4) at (-1, 0) {\large$v_2$};
		\node [style={nofill_node}] (5) at (1, 0) {\large$v_3$};
		\node [style={nofill_node}] (6) at (0, -1) {\large$v_1$};
		\node  (7) at (3, 3) {\large$y = b_2 = b_3$};
		\node  (8) at (3, 2.25) {\large$x=b_{23}$};
	\end{pgfonlayer}
	\begin{pgfonlayer}{edgelayer}
		\draw (1) to (0);
		\draw (0) to (2);
		\draw (1) to (2);
		\draw (0) to (3);
		\draw [style=gray] (4) to (5);
		\draw [style=gray] (5) to (6);
		\draw [style=gray] (6) to (4);
		\draw [style=gray] (3) to (4);
		\draw [style=gray] (3) to (5);
		\draw [style=gray] (0) to (5);
		\draw [style=gray] (0) to (4);
		\draw [style=gray] (4) to (1);
		\draw [style=gray] (6) to (1);
		\draw [style=gray] (6) to (2);
		\draw [style=gray] (5) to (2);
	\end{pgfonlayer}
\end{tikzpicture}}\end{adjustbox}}  & $\set{y}$ & $\set{v_1}$ & $v_1$ \\\cline{3-5}
                                       &                                & $\set{b_{12}}$&$\set{v_3,x}$  &$v_3$ \\\cline{3-5}
                                       &                                & $\set{b_{31}}$&$\set{v_2,x}$  &$v_2$ \\\hline

           \multirow{3}{*}{$b_{12}$} & \multirow{3}{*}{ \begin{adjustbox}{max width=.3\textwidth}{\begin{tikzpicture}
	\begin{pgfonlayer}{nodelayer}
		\node [style={nofill_node}] (0) at (0, 3) {\large$y$};
		\node [style={nofill_node}] (1) at (-4, -2) {\large$b_3$};
		\node [style={nofill_node}] (2) at (4, -2) {\large$b_{23}$};
		\node [style={nofill_node}] (3) at (0, 1.5) {\large $x$};
		\node [style={nofill_node}] (4) at (0, 0.25) {\large$v_1$};
		\node [style={nofill_node}] (5) at (1, -1) {\large$v_2$};
		\node [style={nofill_node}] (6) at (-1, -1) {\large$v_3$};
		\node (7) at (3, 3) {\large{$y = b_2 = b_{31}$}};
		\node  (8) at (3, 2.25) {\large{$x=b_{12}$}};
	\end{pgfonlayer}
	\begin{pgfonlayer}{edgelayer}
		\draw (1) to (0);
		\draw (0) to (2);
		\draw (1) to (2);
		\draw (0) to (3);
		\draw [style=gray] (4) to (5);
		\draw [style=gray] (5) to (6);
		\draw [style=gray] (6) to (4);
		\draw [style=gray] (3) to (4);
		\draw [style=gray] (3) to (5);
		\draw [style=gray, bend left=15, looseness=1.25] (0) to (5);
		\draw [style=gray, bend right] (0) to (4);
		\draw [style=gray] (6) to (1);
		\draw [style=gray] (6) to (2);
		\draw [style=gray] (5) to (2);
		\draw [style=gray, bend left=15, looseness=1.25] (6) to (0);
	\end{pgfonlayer}
\end{tikzpicture}}\end{adjustbox}}  & $\set{y}$ & $\emptyset$ & $-$ \\\cline{3-5}
                                       &  & $\set{b_{3}}$&$\set{x,v_1,v_2}$  &$v_1$ \\\cline{3-5}
                                       &  & $\set{b_{23}}$&$\set{x,v_1}$  &$v_1$ \\\hline

           \multirow{3}{*}{$b_{31}$} & \multirow{3}{*}{ \begin{adjustbox}{max width=.3\textwidth}{\begin{tikzpicture}
	\begin{pgfonlayer}{nodelayer}
		\node [style={nofill_node}] (0) at (0, 3) {\large$y$};
		\node [style={nofill_node}] (1) at (-4, -2) {\large$b_{23}$};
		\node [style={nofill_node}] (2) at (4, -2) {\large$b_{2}$};
		\node [style={nofill_node}] (3) at (0, 1.5) {\large $x$};
		\node [style={nofill_node}] (4) at (0, 0.25) {\large$v_1$};
		\node [style={nofill_node}] (5) at (1, -1) {\large$v_2$};
		\node [style={nofill_node}] (6) at (-1, -1) {\large$v_3$};
		\node  (7) at (3, 3) {\large{$y = b_{12} = b_{3}$}};
		\node  (8) at (3, 2.25) {\large{$x=b_{31}$}};
	\end{pgfonlayer}
	\begin{pgfonlayer}{edgelayer}
		\draw (1) to (0);
		\draw (0) to (2);
		\draw (1) to (2);
		\draw (0) to (3);
		\draw [style=gray] (4) to (5);
		\draw [style=gray] (5) to (6);
		\draw [style=gray] (6) to (4);
		\draw [style=gray] (3) to (4);
		\draw [style=gray, bend left=15, looseness=1.25] (0) to (5);
		\draw [style=gray, bend left] (0) to (4);
		\draw [style=gray] (6) to (1);
		\draw [style=gray] (5) to (2);
		\draw [style=gray, bend left=15, looseness=1.25] (6) to (0);
		\draw [style=gray] (3) to (6);
		\draw [style=gray] (5) to (1);
	\end{pgfonlayer}
\end{tikzpicture}}\end{adjustbox}}  & $\set{y}$ & $\emptyset$ & $-$ \\\cline{3-5}
                                       &  & $\set{b_{2}}$&$\set{x,v_1,v_3}$  &$v_1$ \\\cline{3-5}
                                       &  & $\set{b_{23}}$&$\set{x,v_1}$  &$v_1$ \\\hline

        \end{tabular}
        \caption{Choice of $v_{sp}$ when $B$ is a $3$-cycle with a pendant vertex and $\card{D' \cap V(B)} = 1$. $U$ is the set of vertices not dominated by $D'$}
        \label{tab:2}
    \end{table}
    With the help of Lemma~\ref{facial_cycle} we will now define a partial proper $4$-coloring $\psi$ of the counterexample $G$ in which each $5^-$-vertex $v$ sees all four colors in its closed neighborhood (which includes $v$ and all its neighbors). For a coloring $\psi$ of $G$, we denote the set of colors used in the closed neighborhood of a vertex $v$ as $\psi[v]$.
    
\begin{lemma} \label{coloring}
    If $G$ is a smallest counterexample to Theorem~\ref{MainTheorem}, there exists a partial proper $4$-coloring $\psi$ of $G$, such that
    \begin{enumerate}[(i)]      
        \item any uncolored vertex has degree exactly four, \label{coloring-1}
        \item for every vertex $v$, $|\psi[v]| \geq 3$, \label{coloring-2}
        \item for every $5^-$-vertex $v$, $|\psi[v]| = 4$. \label{coloring-3}
 \end{enumerate}
\end{lemma}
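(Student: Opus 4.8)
The plan is to begin with a proper $4$-colouring of $G$ guaranteed by the Four Colour Theorem and then repair it locally, with Lemma~\ref{facial_cycle} controlling the repairs. The first step is to pin down the only obstructions. In any triangulation on at least four vertices the link of every vertex is a cycle, so it contains an edge whose two endpoints carry colours distinct from that of the vertex; hence every closed neighbourhood already sees at least three colours and (ii) holds for \emph{any} proper $4$-colouring. Further, a $3$-vertex together with its link is a $K_4$, and a $5$-vertex has a $5$-cycle link, which cannot be properly $2$-coloured; so $3$-vertices and $5$-vertices always see all four colours. Thus the only vertices that can violate (iii) are $4$-vertices whose link $4$-cycle is properly $2$-coloured; call such a vertex \emph{bad}. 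If $v$ is bad, say coloured $1$, then its link $(a,b,c,d)$ is coloured $(2,3,2,3)$, and Lemma~\ref{facial_cycle} applied to the four faces $vab,vbc,vcd,vda$ forbids two consecutive link vertices from both being $5^-$. So the $5^-$-vertices on the link form an independent set of the $4$-cycle, and one of the opposite pairs $\set{a,c}$, $\set{b,d}$ consists only of $6^+$-vertices; say $\set{a,c}$, the other case being symmetric after interchanging colours $2$ and $3$.

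The second step is to eliminate bad vertices while keeping a partial proper $4$-colouring whose uncoloured vertices have degree exactly four and form an independent set. Every link vertex of $v$ is adjacent to the colour-$1$ vertex $v$, so the only colour it may receive besides $2$ and $3$ is $4$; hence it suffices to force colour $4$ onto the link of $v$, which makes the link $3$-coloured and $v$ see all four colours. The elementary move is a Kempe switch: if the $\set{2,4}$-component of $a$ avoids $c$, switching it recolours $a$ to $4$ and touches nothing else on the link, and symmetrically with $b,d$ and colours $\set{3,4}$. The only remaining case is that a $\set{2,4}$-path $P$ from $a$ to $c$ and a $\set{3,4}$-path $Q$ from $b$ to $d$ both exist; then the cycle formed by $P$ and the edges $va,vc$ separates $b$ from $d$ in the plane, by the cyclic order of $va,vb,vc,vd$ around $v$, so $Q$ must meet it, necessarily at a colour-$4$ vertex $w$ inside $P$ (the rest of the cycle being coloured only $1$ and $2$). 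Here a further, more delicate sequence of Kempe changes anchored at $w$ (for instance on the $\set{1,4}$-component of $w$, which avoids $v$ since $v$ is isolated in the $\set{1,4}$-subgraph) is needed; and if as a by-product the link of $v$ ever becomes $4$-coloured instead of $3$-coloured, one simply deletes the colour of $v$, which is legitimate since $v$ has degree $4$ and leaves $v$ seeing all four colours among its neighbours.

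The main obstacle, and where I expect most of the effort to go, is to make sure a repair never creates a new violation and that the procedure terminates. A $\set{2,4}$-switch can in principle cost a nearby $5^-$-vertex its colour $2$, and an uncolouring removes a colour from the palettes of $v$'s neighbours; to control this one leans on the fact that $3$- and $5$-vertices are immune, that $6^+$-vertices never need more than three colours, and on Lemma~\ref{facial_cycle} once more to describe the arrangement of $4$-vertices around a recoloured or uncoloured vertex. Rather than set up an explicit induction, I would fix an \emph{extremal} partial proper $4$-colouring with an independent set of uncoloured degree-$4$ vertices, minimising the number of bad vertices (with a suitable tie-breaker, such as the number of colour-$4$ vertices), and then show that a bad vertex would let the moves above yield a strictly better object, a contradiction.

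Finally one reads off the three conditions: (i) because only degree-$4$ vertices are ever uncoloured; (ii) because properness already gives three colours in every closed neighbourhood while an uncoloured degree-$4$ vertex, whose neighbours are all coloured (the uncoloured set being independent), has them coloured with four distinct colours by construction; and (iii) because $3$- and $5$-vertices were never at risk and every bad $4$-vertex has been repaired, so its link now carries at least three colours, together with the colour of $v$ when $v$ is still coloured and exactly four colours when $v$ has been uncoloured.
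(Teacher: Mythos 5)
Your proposal is a plan with two acknowledged holes, and they sit exactly where the difficulty of the lemma lives. First, the crossing--Kempe-chain case is not resolved: when a $\set{2,4}$-path $P$ from $a$ to $c$ and a $\set{3,4}$-path $Q$ from $b$ to $d$ both exist, you correctly locate a colour-$4$ vertex $w$ on $P\cap Q$, but $w$ need not be adjacent to $v$, so $v$ still does not see colour $4$; swapping the $\set{1,4}$-component of $w$ (or of $v$, which is the singleton $\set{v}$) does not manufacture a colour-$4$ neighbour of $v$, and swapping the whole $\set{2,4}$-chain through $a$ and $c$ just turns the link $(2,3,2,3)$ into $(4,3,4,3)$, leaving $v$ bad. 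You write that ``a further, more delicate sequence of Kempe changes \dots is needed'' --- that sequence is the proof, and it is missing. Second, you flag but do not handle the collateral damage: when $v$ is uncoloured, a $5^-$-neighbour $z$ of $v$ loses a colour from its closed neighbourhood and may itself end up seeing only three colours ($z$ gets three colours from the $4$-coloured link of $v$, but nothing forces the fourth into $N[z]$). The extremal argument you sketch (minimise the number of bad vertices) is never shown to strictly improve under your moves, so neither termination nor the absence of new violations is established.

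The paper avoids both problems by not colouring $G$ directly. It takes a maximal independent set $I$ of $4$-vertices not adjacent to $3$-vertices, deletes $I$, and uses Lemma~\ref{facial_cycle} exactly where you observed it applies --- no two consecutive $5^-$-vertices on the link --- to find an opposite pair of $6^+$-vertices in each resulting $4$-face and add that diagonal, producing a smaller triangulation $G'$. Applying the Four Colour Theorem to $G'$ forces every special $4$-cycle to be at least $3$-coloured \emph{in every proper colouring}, which is precisely what your Kempe repairs are struggling to achieve after the fact; the deleted vertex then either receives the missing fourth colour or is left uncoloured with a $4$-coloured link. The residual issue (your second gap) is handled by choosing, among all proper $4$-colourings of $G'$, one maximising the number of $3$-coloured special $4$-cycles: a $5^-$-vertex on a $4$-coloured special cycle that misses its fourth colour could be recoloured with it, contradicting extremality. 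If you want to salvage your approach, the lesson is that the diagonal between the two $6^+$-vertices must be built into the graph \emph{before} invoking the Four Colour Theorem; it cannot be recovered by local recolouring afterwards.
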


\begin{proof}
    Since $K_3$ satisfies Theorem~\ref{MainTheorem}, $G$ is a triangulation on at least $4$ vertices and hence has minimum degree at least $3$. Consider all $4$-vertices in $G$ which are not adjacent to any $3$-vertex. Let $I$ be a maximal independent set of such vertices. We construct a new triangulation $G'$ (possibly non-simple) from $G$ to start our coloring. If $I$ is empty, $G$ itself is considered as $G'$. Else, delete $I$ from $G$ to get $G^-$. Since $I$ is an independent set of $4$-vertices, deletion of each vertex in $I$ creates a $4$-face in $G^-$. Let $f$ be such a $4$-face in $G^-$ and $B$ be its boundary. We make two observations about $B$. By the choice of $I$, no $3$-vertex of $G$ is on $B$. Further, by Lemma~\ref{facial_cycle}, two $5^-$-vertices of $G$ do not appear as adjacent vertices on $B$. Hence there exist two diagonally opposite $6^+$-vertices on $B$. Triangulate $G^-$ by adding an edge joining two $6^+$-vertices in every $4$-face of $G^-$ to obtain a triangulation $G'$. We allow the possibility that $G'$ may have multi-edges. A $4$-cycle in $G'$ corresponding to the boundary of a $4$-face in $G^-$ is called \emph{special}. 
    
    We start with a proper $4$-coloring of $G'$ and then extend it to a partial $4$-coloring of $G$. In a proper $4$-coloring of $G'$, each special $4$-cycle gets either three or four colors. Among all the proper $4$-colorings of $G'$, let $\phi$ be a coloring where the maximum number of special $4$-cycles are $3$-colored. Consider this same coloring $\phi$ of $V(G) \setminus I$ in $G$. Let $u$ be a vertex in $I$. If there are only three colors in $N_G(u)$, then give $u$ the fourth color.
    Else, leave it uncolored. We call this partial proper $4$-coloring of $G$ as $\psi$. Since the uncolored vertices (if any) are in $I$ and $I$ consists of only $4$-vertices,  $\psi$ satisfies~(\ref{coloring-1}). 

    Since $G$ is a triangulation, any vertex $v \in V(G)$ is part of a triangle $uvw$. If the vertices $u$, $v$ and $w$ are all colored then $|\psi[v]| \geq 3$. Since $I$ is an independent set, there can be only at most one uncolored vertex in any triangle of $G$. If $v$ itself is uncolored, then it was left so only because it saw all four colors in its neighborhood and hence $|\psi[v]| = 4$. The only remaining case is when $u$ or $w$, say $u$, is uncolored. Recall that every uncolored vertex in $\psi$ is a $4$-vertex with a $4$-colored $4$-cycle around it. So $v$ is part of a $4$-colored $4$-cycle around $u$ and thus sees three colors from this $4$-cycle. Hence, $\psi$ satisfies~(\ref{coloring-2}).
     
    We call a vertex $v$ \emph{happy} if $|\psi[v]|$ = 4. We will argue that every $5^-$-vertex of $G$ is happy. By choice of $I$, it is guaranteed that no vertex in the closed neighborhood of a $3$-vertex is in $I$ and hence every $3$-vertex is part of a $4$-colored $K_4$. Since all the four vertices of a $4$-colored $K_4$ are happy, all $3$-vertices and their neighbors are happy. The open neighborhood of any $4$-vertex $v$ in $I$ has either four or three colors. In the latter case, $v$ is colored with the fourth color and so $v$ is also happy. By the choice of $I$, any remaining $4$-vertex is adjacent to a vertex in $I$. Next, we consider both $4$-vertices and $5$-vertices adjacent to $I$.
    
    Let $v$ be any $5^-$-vertex adjacent to a vertex $u$ in $I$ and let $B_u = (v, v_2, v_3, v_4)$ be the $4$-cycle around $u$. The triangulation $G'$ contains the edge $v_2v_4$ (since the endpoints of new edges are $6^+$-vertices) and hence $\phi(v_2) \neq \phi(v_4)$. Thus $v$ sees three colors from $B_u$. If $u$ is colored, then it should be the fourth color, making $v$ happy. If $u$ is uncolored, then we will show that $v$ has a neighbor with the fourth color in $G$. If $v$ is part of any $3$-colored special $4$-cycle, then the above argument shows that $v$ is happy. Suppose $v$ was not a part of any special $4$-cycle and it did not have a neighbor with the fourth color in $G'$, then recoloring $v$ with the fourth color will give a proper coloring of $G'$ with more $3$-colored special $4$-cycles, which is a contradiction to the choice of $\phi$. 
    
    The only $5^-$-vertices to be analyzed are the $5$-vertices that are not adjacent to any vertex in $I$. If $v$ is such a vertex, then it has a fully colored odd cycle around it. Every odd cycle in a proper coloring gets at least three colors. So $N_G[v]$ gets all four colors making $v$ happy. This concludes the proof that $\psi$ satisfies~(\ref{coloring-3}).
\end{proof}

 Let $\psi$ be a partial proper $4$-coloring of $G$ which satisfies the conclusions of Lemma~\ref{coloring}. The coloring $\psi$ partitions $V(G)$ into five subsets $C_1, C_2, C_3, C_4, \overline{C}$. Each $C_i$ is the set of vertices with color $i$ and $\overline{C}$ is the set of uncolored vertices. Let $U_i$, $i \in [4]$, be the set of vertices of $G$ not in $C_i$ and with no neighbor in $C_i$. Let $U=\cup_{i=1}^4U_i$. We make two claims about $U$. From Lemma~\ref{coloring}~(\ref{coloring-1}) and~(\ref{coloring-3}), it can be observed that $U\cap \overline{C} = \emptyset$. Similarly, from Lemma~\ref{coloring}~(\ref{coloring-2}), we know when $i\neq j$, $U_i \cap U_j = \emptyset$. We call the set of edges between $U_i$ and $U_j$, $i \neq j$ as \emph{bad edges} $E_B$. Let $V_B$ be the set of all end points of bad edges. We call the graph $G_B = (V_B,E_B)$ the \emph{bad subgraph} of $G$. We refer to the $4$-cycle around an uncolored vertex as a \emph{critical cycle}.

 \begin{observation} \label{Obs:1}
    Every bad edge $e$ of $G$ is a part of two critical cycles.
\end{observation}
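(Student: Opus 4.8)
The plan is to unwind the definitions of $U$, \emph{bad edge}, and \emph{critical cycle}, and to show that the third vertex of \emph{each} of the two triangular faces incident to a bad edge must be uncolored; these two uncolored vertices then supply the two critical cycles through the edge.

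I would fix a bad edge $e = xy$ with, say, $x \in U_1$ and $y \in U_2$ (the $U_i$ are pairwise disjoint, so the two endpoints lie in well-defined, distinct $U_i$'s). Since $U \cap \overline{C} = \emptyset$, both $x$ and $y$ are colored. The membership $x \in U_1$ says that no colored vertex of $N_G[x]$ carries colour $1$, and $y \in U_2$ says no colored vertex of $N_G[y]$ carries colour $2$. Applying the first fact to the colored vertices $x$ and $y$ (both in $N_G[x]$), the second fact to $x$ and $y$ (both in $N_G[y]$), and using $\phi(x) \neq \phi(y)$ since $x \sim y$, one forces $\{\phi(x), \phi(y)\} = \{3, 4\}$.

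Next I would take $z$ to be the third vertex of one of the two triangular faces incident to $e$; it exists, differs from $x$ and $y$, and is adjacent to both, because $G$ is a triangulation. If $z$ were colored, then $\phi(z) \neq 1$ (as $z \in N_G[x]$, $x \in U_1$), $\phi(z) \neq 2$ (as $z \in N_G[y]$, $y \in U_2$), and $\phi(z) \notin \{\phi(x), \phi(y)\} = \{3, 4\}$ by properness --- which leaves no available colour, a contradiction. Hence $z$ is uncolored, so by Lemma~\ref{coloring}~(\ref{coloring-1}) it is a $4$-vertex, its four neighbours form its critical cycle $B_z$, and since $xyz$ is a face the vertices $x, y$ are consecutive on $B_z$, giving $xy \in E(B_z)$. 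Running the identical argument on the third vertex $w$ of the other face incident to $e$ shows $w$ is uncolored with $xy \in E(B_w)$. Since the two faces of a triangulation on at least four vertices incident to a common edge have distinct third vertices, $z \neq w$, so $B_z$ and $B_w$ are two distinct critical cycles containing $e$.

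I expect no serious obstacle here; the one place to be careful is the opening deduction $\{\phi(x), \phi(y)\} = \{3, 4\}$, where the point is that each endpoint lies in the \emph{closed} neighbourhood of the other, so it must avoid not only the colour it itself misses but also the colour missed by its neighbour across $e$. Everything after that is just bookkeeping with the proper-colouring constraints around the triangle $xyz$ and the elementary structure of $4$-vertices in a triangulation.
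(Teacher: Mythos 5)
Your proposal is correct and follows essentially the same route as the paper: both arguments show that the third vertex of each of the two faces incident to a bad edge cannot be colored (the paper phrases this as the two endpoints seeing three common colors, you phrase it as the third vertex having no admissible color), and then invoke Lemma~\ref{coloring}~(\ref{coloring-1}) to conclude that each such vertex is an uncolored $4$-vertex whose surrounding $4$-cycle is a critical cycle through the edge. Your version merely makes the color bookkeeping explicit where the paper compresses it into one sentence.
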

\begin{proof}
    Every edge in a triangulation is a part of two faces, hence $e$ is also part of two faces, say $uxv$ and $uyv$. If either $x$ or $y$ is colored then $u$ and $v$ see three common colors in their closed neighborhood which contradicts the fact that $e$ is a bad edge. So $e$ is a part of two critical cycles. 
\end{proof}

 \begin{lemma} \label{vertex_cover}
     Let $G$ be a smallest counterexample to Theorem~\ref{MainTheorem} and $\psi$ be a partial proper $4$-coloring satisfying the conclusions of Lemma~\ref{coloring} with $\overline{C}$ being the set of uncolored vertices. The bad subgraph $G_B$ of $G$ has a vertex cover of size at most $\card{\overline{C}}$. 
 \end{lemma}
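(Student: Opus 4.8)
The plan is to exhibit an explicit vertex cover of $G_B$ by choosing, for each bad edge, one of its endpoints in a consistent way, and then charging the chosen vertices to the uncolored vertices in $\overline{C}$ via the critical cycles. Recall from Observation~\ref{Obs:1} that every bad edge lies on exactly two critical cycles, and each critical cycle is the $4$-cycle $B_u$ surrounding some uncolored vertex $u$; by Lemma~\ref{coloring}~(\ref{coloring-1}) and the construction of $\psi$, each such $B_u$ is a $4$-colored $4$-cycle. First I would examine a single critical cycle $B_u = (a,b,c,d)$ and determine which of its four edges can be bad. Since the four vertices $a,b,c,d$ receive four distinct colors, for a given $i\in[4]$ exactly one of them, say the one colored $i$, is automatically not in $U_i$, and moreover every vertex of $B_u$ other than that one has a neighbour (namely that vertex, or $u$ if $u$ is colored $i$ — but $u$ is uncolored) colored $i$ only if the edge to it is present. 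Working this out shows that an edge $xy$ of $B_u$ can be bad only in a restricted pattern: $x\in U_i$ and $y\in U_j$ forces that neither $x$ nor $y$ is colored $i$ or $j$, and the two remaining vertices of $B_u$ carry colors $i$ and $j$; since $B_u$ is a $4$-cycle this pins down that the bad edges on $B_u$ are the (at most two) edges of the one "diagonal pair" determined by how colors are distributed. The upshot I expect is that each critical cycle contains at most two bad edges, and if it contains two they share no vertex (they are opposite edges of the $4$-cycle) — or more precisely that the bad edges on $B_u$ form a subgraph on at most $3$ vertices with a vertex cover of size one, or at worst that a single well-chosen vertex of $B_u$ covers all bad edges incident with $B_u$ that we still need to cover.

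The main step is then a global counting/charging argument. I would build an auxiliary bipartite incidence structure between bad edges and critical cycles: each bad edge is incident to exactly two critical cycles (Observation~\ref{Obs:1}), and each critical cycle corresponds bijectively to a vertex of $\overline{C}$. So the bad edges, together with the critical cycles, form a graph (or multigraph) in which every bad edge has "degree $2$" into the set of critical cycles, i.e. the bad edges are the edges of a graph $H$ whose vertex set is (a subset of) $\overline{C}$, each $u\in\overline{C}$ contributing the bad edges lying on $B_u$. Since each $B_u$ contributes at most two bad edges (from the local analysis above), $H$ has maximum degree at most $2$, so $H$ is a disjoint union of paths and cycles on $\le \card{\overline{C}}$ vertices, hence $|E(H)| \le \card{\overline{C}}$. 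But $|E(H)|$ is exactly the number of bad edges. I would then argue that a graph with at most $\card{\overline{C}}$ edges — in fact with this very path/cycle structure — has a vertex cover of size at most $\card{\overline{C}}$; the cleanest route is that $G_B$ itself is "covered" by selecting one endpoint per bad edge with multiplicity controlled by the degree-$2$ structure, or simply that any graph on $m$ edges has a vertex cover of size $m$, combined with $m \le \card{\overline{C}}$. Actually, to get exactly $\card{\overline{C}}$ I would prefer the sharper observation that the bad edges, viewed as living two-per-critical-cycle, can be oriented/assigned so that each critical cycle $u$ "pays for" at most one vertex of the cover — concretely, for each $u\in\overline{C}$ pick the unique vertex of $B_u$ that covers all bad edges on $B_u$ (the local analysis should show such a vertex exists because the $\le 2$ bad edges on $B_u$ meet in a common vertex or I re-choose it) and let the cover be the set of these chosen vertices, which has size at most $\card{\overline{C}}$.

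The hard part will be the local analysis of a critical cycle: pinning down exactly which edges of $B_u$ can be bad, and in particular whether a critical cycle can have two bad edges that do \emph{not} share a vertex, because in that case a single vertex of $B_u$ would not cover both and the clean "one vertex per uncolored vertex" bound could fail. I expect the resolution to use the structure of $\psi$ more tightly — namely that $B_u$ is a $4$-colored $4$-cycle and that, for the two opposite vertices $b,d$ of $B_u$ that are joined by an edge in the smaller triangulation $G'$ (recall these were forced to be $6^+$-vertices and to receive distinct colors under $\phi$), at least one of $b,d$ fails to be in any $U_i$, so that the bad edges on $B_u$ are confined to involve the other diagonal pair $a,c$ and hence meet at $a$ or at $c$; selecting that shared vertex then works. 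If instead the analysis yields two vertex-disjoint bad edges on some $B_u$, I would fall back to the weaker but still sufficient bound: the number of bad edges is at most $\card{\overline{C}}$ (from the degree-$\le 2$-into-$\overline{C}$ structure), and a graph with at most $\card{\overline{C}}$ edges has a vertex cover of size at most $\card{\overline{C}}$, since one may simply take one endpoint of each edge. Either way the conclusion $\tau(G_B) \le \card{\overline{C}}$ follows; I would present whichever of the two arguments the local lemma supports, and I anticipate the local lemma does give the shared-vertex property, making the charging argument go through cleanly.
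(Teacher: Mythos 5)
Your plan hinges on a local claim about critical cycles --- that each critical cycle $B_u$ carries at most two bad edges, and ideally that these meet in a common vertex --- and both your primary argument and your fallback collapse without it. That claim is not established by the structure of $\psi$, and it appears to be false. Write $B_u=(v_1,v_2,v_3,v_4)$ with $\phi(v_1),\dots,\phi(v_4)$ all distinct. Each $v_i$ sees its own color and the colors of its two neighbours on $B_u$, so the only color $v_i$ can possibly miss is $\phi(v_{i+2})$, the color of its \emph{opposite} vertex. Nothing prevents all four vertices from simultaneously missing their opposite's color (the diagonal between the two $6^+$-vertices is an edge of $G'$ only, not necessarily of $G$, so it imposes no adjacency constraint in $G$), in which case all four edges of $B_u$ are bad. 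Then no single vertex of $B_u$ covers the bad edges on $B_u$; your auxiliary multigraph on $\overline{C}$ has maximum degree $4$ rather than $2$, so the edge count only yields $\card{E(G_B)}\le 2\card{\overline{C}}$ and a cover of size $2\card{\overline{C}}$ --- not enough. Even the configuration you yourself flag as problematic, two \emph{opposite} bad edges on $B_u$, already defeats the one-vertex-per-critical-cycle charging, and your proposed rescue (that one of the two diagonal $6^+$-vertices lies in no $U_i$) does not follow, again because their joining edge lives only in $G'$.

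The paper closes exactly this hole with a global structural step that your proposal lacks: each connected component $H_l$ of $G_B$ is bipartite, with each part equal to the union of two color classes restricted to $H_l$ (if $x$ is colored $a$ and misses $c$, one part is $(C_a\cap U_c)\cup(C_c\cap U_a)$ and the other is $(C_b\cap U_d)\cup(C_d\cap U_b)$). Taking one part from each component yields a vertex cover $J$ in which every bad edge has \emph{exactly} one endpoint; consequently $J$ meets each critical cycle in at most two vertices even when all four of its edges are bad (the two vertices form a diagonal pair). The bound $\card{J}\le\card{\overline{C}}$ then follows by double counting in the bipartite incidence graph between $J$ and $\overline{C}$: each vertex of $J$ is an endpoint of a bad edge and hence, by Observation~\ref{Obs:1}, lies on at least two critical cycles, while each uncolored vertex contributes at most two such incidences. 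You would need to supply this bipartition of $G_B$ (or an equivalent device guaranteeing one endpoint per bad edge and at most two cover vertices per critical cycle) for your charging argument to go through.
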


\begin{proof}

 Let $H_1,\ldots,H_k$ be the connected components of $G_B$. For each $H_l$, $l \in [k]$, we prove the following claim. We say that a color $i$ is \emph{missing} at a vertex $v$ if no vertex in $N[v]$ is colored $i$, i.e., $v \in U_i$.

\begin{claim} \label{claim:bipartite}
    $H_l$ is bipartite with each part being the union of two color classes of $\psi$ restricted to $H_l$.
\end{claim}

Let $x$ be a vertex in $H_l$. Let us call $\psi(x)$ as $a$, the color missing at $x$ as $c$ and the two remaining colors as $b$ and $d$. Let $V_0 = (C_a \cap U_c) \cup (C_c \cap U_a)$  and  $V_1 = (C_b \cap U_d) \cup (C_d \cap U_b)$. Let $u$ be any vertex in $V_0$ and $v$ be any neighbor of $u$ in $H_l$. Since one of the colors in $\set{a,c}$ is $\psi(u)$ and the other is missing at $u$, $v$ has to be colored $b$ or $d$. The vertices $u$ and $v$ cannot be missing the same color (by the definition of a bad edge) and since $v$ cannot be missing $\psi(u)$, $v$ will be missing $b$ or $d$. Hence $v$ is in $V_1$. Similarly, we can show that for any vertex $u$ in $V_1$ all its neighbors in $H_l$ belong to $V_0$.  Since $H_l$ is connected, every vertex in $H_l$ belongs to either $V_0$ or $V_1$ and every edge in $H_l$ is between $V_0$ and $V_1$. Hence, $H_l$ is bipartite with each part being a union of two color classes. Note that both these parts are vertex covers of $H_l$.
  
Let $J$ be a vertex cover of $G_B$ which is constructed by adding one part of each $H_l$.  

\begin{claim}
    $\card{J} \leq \card{\overline{C}}$.   
\end{claim}

Consider the bipartite subgraph $H$ of $G$ defined as follows. The two parts of $H$ are $J$ and $\overline{C}$. We retain an edge $xy \in E(G)$ from $\overline{C}$ to $J$ only if $y$ is the endpoint of a bad edge in the critical cycle around $x$. We ignore all other edges. Consider any vertex $v$ in $J$. Since every bad edge is part of two critical cycles (Observation~\ref{Obs:1}), $v$ is adjacent to at least two vertices in $\overline{C}$. Now consider any vertex $u$ in $\overline{C}$. The vertex $u$ is only connected to end points of bad edges in the critical cycle around it. Only one end point of any bad edge is there in $J$, since it is constructed by adding one part of each $H_l$. So $u$ has at most two neighbors in $H$. Since every vertex in $J$ has at least two neighbors in $\overline{C}$ and every vertex in $\overline{C}$ has at most two in $H$, a simple double counting shows that $\card{J} \leq \card{\overline{C}}$.
\end{proof}

We proceed to prove Theorem~\ref{MainTheorem}. Let $J$ be a vertex cover of $G_B$ of size at most $\card{\overline{C}}$.  
For each $i\in[4]$, consider a maximal independent set $I_i$ of $U_i\setminus J$. The set $I_i$ dominates $U_i \setminus J$. Expand $I_i$ to a maximal independent set $I_i^+$ of $U_i$ by greedily adding vertices from $U_i \cap J$. The set $I_i^+$ will dominate the entire $U_i$. Hence, the set $D_i = C_i\cup I_i^+$ is an independent dominating set of $G$. Since $G[U\setminus J]$ does not have any bad edges, there are no edges between $I_i$ (subset of $U_i$) and $I_j$ (subset of $U_j$) when $i\neq j$. Hence,  $I=\cup_{i=1}^4I_i$ is an independent set. By Lemma~\ref{coloring} (\ref{coloring-3}), we know that every vertex in $U$ and hence $I$ has degree at least six. From Observation~\ref{independent_set}, the size of $I$ is at most $({n-2})/{3}$. So we have

    \begin{equation}
        \begin{split}
            \sum_{i=1}^4 |D_i| & = \sum_{i=1}^4 |C_i| + \sum_{i=1}^4 |I_i^+| \\
                        & \leq  \sum_{i=1}^4 |C_i| + \sum_{i=1}^4 |I_i| + \sum_{i=1}^4 |J \cap U_i|\\
                        & = n-|\overline{C}| + |I| + |J|\\
                        & \leq n-|\overline{C}| + \frac{n-2}{3} + |\overline{C}|\\
                        & = \frac{4n-2}{3}
        \end{split}
    \end{equation}

Hence, the smallest $D_i$ has size less than $n/3$. This contradiction completes the proof of Theorem~\ref{MainTheorem}.

\section{Concluding Remarks}

The triangle and the octahedron are triangulations which satisfy $\iota(G) = |V(G)|/3$. However, we are unable to find an infinite family of triangulations with this property. Let $f(n)$ denote the maximum independent domination number among all $n$-vertex triangulations. Goddard and Henning~\cite{goddard2020independent} construct, for positive integer $k$, a triangulation $G$ on $n = 19k - 12$ vertices with $\iota(G) \geq 6k - 4$, a lower bound which approaches $6n/19$ for large $k$\footnote{However, they reported it as $5n/19$ and this was improved to $2n/7$ (which is weaker than $6n/19$) by Botler et al.~\cite{botler2023independent}.}. Hence $\frac{6}{19} - o(1) \leq \frac{f(n)}{n} \leq \frac{1}{3}$. Closing this gap is an immediate task.

We can construct an infinite family of non-simple triangulations where $\iota(G) \geq |V(G)|/3$.  Start with a $2k$-vertex triangulation $T$, which contains a perfect matching $M$. Replace every edge $xy$ of the matching $M$ with a gadgets made of a $2$-cycle $(x,y,x)$ and a $2$-length path $(x,z,y)$ inside it to obtain a new triangulation $G$ on $n = 3k$ vertices. Any dominating set $D$ of $G$ should contain at least one vertex from each of the gadgets. Hence $|D| \geq k$. So we have $\iota(G) \geq \gamma(G) \geq n/3$. However, we do not have a matching upper bound for non-simple triangulations. This is another interesting problem.

  
The question by Goddard and Henning, whether there exist three disjoint independent dominating sets in every triangulation, remains unsettled and is quite intriguing. We also wonder whether the upper bound $\iota(G) \leq n/3$ can be extended to triangulated discs. If so, it will be a structural strengthening of the quantitatively tight result of Matheson and Tarjan that $\gamma(G) \leq n/3$ for any triangulated disc $G$. 

\bibliographystyle{ams}
\bibliography{bibtex}
\end{document}